\newtheorem{theorem}{Theorem}
\newtheorem*{main-result}{Theorem}
\newtheorem{corollary}[theorem]{Corollary}
\theoremstyle{definition}
\newtheorem{question}{Question}
\newtheorem{def-theo}[theorem]{Definition-Property}
\theoremstyle{remark}
\newtheorem{claim}{Claim}
\DeclareMathOperator{\id}{id}
\DeclareMathOperator{\dd}{\mathbb{D}}
\DeclareMathOperator{\utei}{\mathbf{T}}
\DeclareMathOperator{\teiq}{\mathbf{T}^q}
\DeclareMathOperator{\stei}{\mathbf{T}_s}
\newcommand{\Mod}[1]{\mathrm{Mod}\!\left(#1\right)}
\newcommand{\modred}[2]{\mathrm{M}^{\textrm{red}}\!\left(#1, #2\right)}
\newcommand{\tei}[1]{\mathbf{T}^{#1}}
\newcommand{\inthyp}[1]{\mathrm{L}^{#1}\!\left( \mathbb{D}, \sigma \right)}
\begin{document}

\title[On a smoothness result]{On Smoothness of the elements of some integrable Teichm\"uller spaces}
\author{Vincent Alberge and Melkana Brakalova}

\begin{abstract}
In this paper we focus  on the integrable Teichm\"uller spaces $\tei{p}$ ($p>0$) which are subspaces of the symmetric subspace of the universal Teichm\"uller space. We prove that any element of   $\tei{p}$ for $0<p\leq 1,$  is a $\mathcal{C}^1$-diffeomorphism.
\end{abstract}

\subjclass[2010]{30C62, 30C99, 30F60}
\keywords{Integrable Teichm\"uller spaces, module, reduced module, symmetric and quasisymmetric mappings}

\maketitle

\section{Introduction}

The universal Teichm\"uler space $\utei$ is the space of \emph{quasisymmetric} homeomorphisms of the unit circle $\mathbb{S}^1$ fixing $1$, $i$, and $-1$. A mapping $f: \mathbb{S}^1\rightarrow \mathbb{S}^1$ is said to be quasisymmetric if  there exists $M>0$ such that 
$$
\forall \theta\in\mathbb{R},\forall t>0,\; \frac{1}{M}\leq\left|\frac{f(e^{i(\theta+t)})-f(e^{i\theta})}{f(e^{i\theta})-f(e^{i(\theta-t)})}\right|\leq M .
$$
Due to a well-known  result by Ahlfors and Beurling  \cite{beurling&ahlfors} one can give an equivalent description of $\utei$. More precisely, the universal Teichm\"uller space can be defined as  the set of \emph{Teichm\"uller equivalence classes} of \emph{quasiconformal mappings} of the unit disc $\dd$ fixing $1$, $i$, and $-1$ where two such mappings are Teichm\"uller equivalent if they coincide on $\mathbb{S}^1$.  A mapping $F:D \rightarrow F(D),$ where $D\subset \mathbb C$ is a domain,  is called quasiconformal (or q.c. for short) if it is  an orientation-preserving homeomorphism  and if its distributional derivatives $\partial_z F$ and $\partial_{\overline{z}}F$ can be represented by locally square integrable  functions (also denoted by $\partial_z F$ and $\partial_{\overline{z}}F$) on $D$ such that 
$$
\left\Vert \frac{\partial_{\overline{z}}F}{\partial_z F} \right\Vert_{\infty} = \underset{z\in D}{\textrm{ ess.sup}}\left| \frac{\partial_{\overline{z}}F\left(z\right)}{\partial_z F\left( z \right)} \right| <1.
$$
We also recall that for $z=x+iy$, $\partial_{\overline{z}}=\tfrac{1}{2}(\partial_{x}+i\partial_{y})$ and $\partial_{z}=\tfrac{1}{2}(\partial_{x}-i\partial_{y})$. Furthermore, if $F$ is a quasiconformal mapping, the function $\mu_F =\tfrac{\partial_{\overline{z}}F}{\partial_z F},$ defined a.e., is called the \emph{Beltrami coefficient} associated with $F$. By the measurable Riemann mapping theorem,  if a measurable function $\mu$ on $D$ is such that  $\left\Vert \mu \right\Vert_{\infty}<1$, then it is the Beltrami coefficient of some quasiconformal mapping, which we will  denote  here by $F^{\mu}$. 

Let us now introduce an important subspace of $\utei$, namely, the \emph{symmetric Teichm\"uller space} denoted here by $\stei$. Following a terminology introduced by Gardiner and Sullivan  \cite{gardiner&sullivan}, it is the space of \emph{symmetric} homeomorphism of $\mathbb{S}^1$ fixing $1$, $i$, and $-1$. One recalls that $f:\mathbb{S}^1\rightarrow \mathbb{S}^1$ is symmetric if it is an orientation-preserving homeomorphism of $\mathbb{S}^1$ such that 
\begin{equation}\label{eq:0}
\frac{f(e^{i(\cdot+t)})-f(e^{i\cdot})}{f(e^{i\cdot})-f(e^{i(\cdot-t)})} \underset{t\rightarrow 0^+}{\longrightarrow} 1,
\end{equation}
with respect to the uniform convergence  on $\mathbb{R}$. As for the universal Teichm\"uller space one has an equivalent description of such a space that involves quasiconformal mappings. Indeed, Gardiner and Sullivan proved (see Theorem 2.1 in \cite{gardiner&sullivan}) that $\stei$ corresponds to the space of Teichm\"uller equivalent classes of quasiconformal mappings of $\dd$ fixing $1$, $i$, and $-1$ admitting a representative which is \emph{asymptotically conformal} on $\mathbb{S}^1$. Let us recall that a quasiconformal mapping $F: \dd \rightarrow \dd$ is said to be asymptotically conformal on $\mathbb{S}^1$ if for every $\epsilon>0$, there exists a compact subset $K_{\epsilon}$ of $\dd$ such that for any $z\in \dd\setminus K_{\epsilon}$, $\left| \mu_{F}(z)\right| <\epsilon$.  

Here we focus on some  interesting infinite dimensional  subspaces of $\utei$, the $p$-\emph{integrable Teichm\"uller spaces}, which we define for each $p>0$ as the set 
$$
\tei{p}=\left\lbrace f\in\utei\mid \exists F:\dd\rightarrow \dd, \textrm{ q.c. such that } F_{\vert_{\mathbb{S}^1}} = f \textrm{ and }\mu_F \in\inthyp{p}  \right\rbrace,
$$
where $\sigma$ is the hyperbolic  measure on $\dd$, that is, for any $z=x+iy \in\dd$, $d\sigma (z) = (1-\left| z \right|^2)^{-2}dxdy$.  It is elementary to observe from such a definition that if $q>p>0$, then $\tei{p} \subset \teiq$. The  spaces $\tei{p}$, $p\geq 2$, were  first introduced by Guo  \cite{guo} through an equivalent description involving univalent functions. At about the same time, Cui  \cite{cui} studied the case $p=2$ and gave a few important characterizations of the elements of $\tei{2}$. In particular, he proved that the Beltrami coefficient associated with the \emph{Douady--Earle extension} (see \cite{douady&earle}) of any element of $\tei{2}$ belongs to $\inthyp{2}$. Later on, Takhtajan and Teo  \cite{takhtajan&teo} introduced a  Hilbert manifold structure on the universal Teichm\"uller space that makes the space $\tei{2}$ the connected component of the identity mapping $\id_{\mathbb{S}^1}$. With respect to such a structure, they proved that the so-called  \emph{Weil--Petersson} metric is a Riemannian metric on $\utei$. Following Takhtajan and Teo's work, the space $\tei{2}$ is now reffered to as the \emph{Weil--Petersson Teichm\"uller space}. For further results on $\tei{2}$ we refer to \cite{shen}.   Let us point out that one can obtain $\tei{2}\subset\stei$  by combining  \cite[Theorem 2 and Lemma 2]{cui} and \cite[Theorem 4]{earle&markovic&saric}, see \cite[Section 3]{fan&hu} for a more detailed explanation. One can also mention the paper \cite{tang} by Tang where in particular, Cui's result concerning  the Douady--Earle extension is extended to all spaces $\tei{p}$ with $p\geq 2$.  Recently, the second author of this paper proved in  \cite{brakalovaxx}    that $\tei{2}\subset\stei$ using an approach based on module techniques and the so-called  \emph{Teichm\"uller's Modulsatz} (see \cite[\S 4]{T13}), and later on using a different method  she proved that for any $p>0$, $\tei{p}\subset \stei$ (see \cite{brakalova18}). 

In this paper we only deal with  $\tei{p}$ for $0<p\leq 1$ and we give  a proof of the following result:
\begin{theorem}\label{main-result}
Let $p\leq1$. Then, any element of $\tei{p}$ is a $\mathcal{C}^1$-diffeomorphism.
\end{theorem}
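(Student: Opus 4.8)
The plan is to trade this boundary-regularity statement for an \emph{interior} conformality statement via reflection, and then apply a (quantitative) form of the Teichm\"uller--Wittich--Belinski\u{\i} theorem at every boundary point, with enough uniformity to reach $\mathcal{C}^1$-smoothness. Let $f\in\tei{p}$ with $p\le1$ and pick a quasiconformal extension $F:\mathbb{D}\to\mathbb{D}$ with $\mu_F\in\inthyp{p}$. Since $0\le|\mu_F|<1$ a.e.\ and $0<p\le1$, we have $|\mu_F|\le|\mu_F|^p$ a.e., so $\mu_F\in\inthyp{1}$, that is $\int_{\mathbb{D}}|\mu_F(z)|(1-|z|^2)^{-2}\,dx\,dy<\infty$. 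Reflecting $F$ across $\mathbb{S}^1$ — legitimate because $f=F|_{\mathbb{S}^1}$ is a homeomorphism of $\mathbb{S}^1$ and $\mathbb{S}^1$ is a removable set for quasiconformal maps — produces a quasiconformal homeomorphism $\widehat F$ of $\widehat{\mathbb{C}}$, symmetric under $z\mapsto 1/\overline{z}$, with $\widehat F|_{\mathbb{S}^1}=f$ and Beltrami coefficient $\widehat\mu$ satisfying $|\widehat\mu(z)|=|\mu_F(1/\overline{z})|$ for $|z|>1$. The change of variables $z\mapsto 1/\overline{z}$ then gives
\[
\int_{\widehat{\mathbb{C}}}\frac{|\widehat\mu(z)|}{(1-|z|^2)^2}\,dx\,dy=2\int_{\mathbb{D}}\frac{|\mu_F(z)|}{(1-|z|^2)^2}\,dx\,dy<\infty .
\]

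Fix $\zeta_0\in\mathbb{S}^1$ and choose M\"obius transformations $S_1,S_2$ taking $\mathbb{D}$ onto $\mathbb{H}$ with $S_1(\zeta_0)=0$ and $S_2(f(\zeta_0))=0$. The conjugate $G:=S_2\circ\widehat F\circ S_1^{-1}$ is a quasiconformal homeomorphism of $\widehat{\mathbb{C}}$ fixing $0$, preserving $\mathbb{H}$ and $\mathbb{R}\cup\{\infty\}$, symmetric under $z\mapsto\overline{z}$, with Beltrami coefficient $\nu$ satisfying $|\nu|=|\widehat\mu\circ S_1^{-1}|$. Because $S_1$ is an isometry between the hyperbolic metrics of $\mathbb{D}$ and $\mathbb{H}$, the last bound becomes $\int_{\widehat{\mathbb{C}}}|\nu(\zeta)|(\im\zeta)^{-2}\,d\xi\,d\eta<\infty$ (with $\zeta=\xi+i\eta$). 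The key elementary point is that, since $(\im\zeta)^2=(\im(\zeta-x_0))^2\le|\zeta-x_0|^2$, for \emph{every} $x_0\in\mathbb{R}$ and every $r>0$,
\[
\int_{|\zeta-x_0|<r}\frac{|\nu(\zeta)|}{|\zeta-x_0|^2}\,d\xi\,d\eta\ \le\ \int_{\widehat{\mathbb{C}}}\frac{|\nu(\zeta)|}{(\im\zeta)^2}\,d\xi\,d\eta\ <\ \infty .
\]
Thus the Teichm\"uller--Wittich--Belinski\u{\i} theorem applies to $G$ at each $x_0\in\mathbb{R}$: the quotient $\frac{G(\zeta)-G(x_0)}{\zeta-x_0}$ converges, as $\zeta\to x_0$, to a finite nonzero limit $c(x_0)$. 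Restricting $\zeta$ to $\mathbb{R}$ and using that $G|_{\mathbb{R}}$ is an increasing homeomorphism gives $c(x_0)=(G|_{\mathbb{R}})'(x_0)>0$; transported back by $S_1,S_2$, this says $f$ is differentiable with nonvanishing derivative at every point of $\mathbb{S}^1$.

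It remains to upgrade pointwise differentiability to $\mathcal{C}^1$, i.e.\ to continuity of $x_0\mapsto c(x_0)$. For this I would use a \emph{quantitative} Teichm\"uller--Wittich--Belinski\u{\i} theorem in which the rate at which $\frac{G(\zeta)-G(x_0)}{\zeta-x_0}$ approaches $c(x_0)$ is controlled by the tail $\tau(x_0,r):=\int_{|\zeta-x_0|<r}|\nu(\zeta)|\,|\zeta-x_0|^{-2}\,d\xi\,d\eta$. On a fixed compact interval $J\subset\mathbb{R}$ these tails are \emph{uniformly} small for small $r$: the density $|\nu(\zeta)|(\im\zeta)^{-2}$ is integrable on a bounded neighbourhood of $J$, so absolute continuity of the Lebesgue integral, combined with the fact that the disc $\{|\zeta-x_0|<r\}$ has area $\pi r^2$, forces $\sup_{x_0\in J}\tau(x_0,r)\to0$ as $r\to0^+$. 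Hence $\frac{G(\zeta)-G(x_0)}{\zeta-x_0}\to c(x_0)$ \emph{uniformly} for $x_0\in J$; and since for each fixed $t\neq0$ the map $x_0\mapsto\frac{G(x_0+t)-G(x_0)}{t}$ is continuous, its uniform limit $c$ is continuous on $J$. Therefore $G|_{\mathbb{R}}$ is a $\mathcal{C}^1$-diffeomorphism near $0$; composing with the real-analytic $S_1,S_2$ makes $f$ a $\mathcal{C}^1$ map with nonvanishing derivative on an arc about $\zeta_0$, and since $\zeta_0$ was arbitrary, $f$ is a $\mathcal{C}^1$-diffeomorphism of $\mathbb{S}^1$.

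I expect the main obstacle to be exactly this last step: the bare Teichm\"uller--Wittich--Belinski\u{\i} theorem yields conformality only point by point, so a version with explicit control of the error term is needed, and it is precisely here that the integrability hypothesis is genuinely used — the restriction $p\le1$ is what allows passing from $\mu_F\in\inthyp{p}$ to $\mu_F\in\inthyp{1}$, an implication that fails for $p>1$. The auxiliary facts — legitimacy of the reflection, the two change-of-variables identities, and the hyperbolic normalisation — are routine and I would dispatch them quickly.
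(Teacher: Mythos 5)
Your reduction from $p\le 1$ to $p=1$, the reflection of $F$ across $\mathbb{S}^1$, the conjugation to the half-plane, and the pointwise application of Theorem \ref{theorem:1} at each boundary point are all correct, and they parallel the paper's Claim \ref{claim1} (the paper extends $\mu_F$ by zero rather than reflecting, and uses $(1-|z|^2)^2\le 4|z-\zeta_0|^2$ where you use $(\im\zeta)^2\le|\zeta-x_0|^2$; either device works). Your observation that the tails $\tau(x_0,r)$ are uniformly small on compact boundary sets, by absolute continuity of the integral of the fixed $\mathrm{L}^1$ density, is also sound and plays the same role as the choice of the compact set $K_\epsilon$ in (\ref{eq:6}).

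The genuine gap is the step you yourself flag: the continuity of the derivative rests on a ``quantitative Teichm\"uller--Wittich--Belinski\u{\i} theorem'' in which the rate of convergence of $\frac{G(\zeta)-G(x_0)}{\zeta-x_0}$ to $c(x_0)$ is controlled by the tail $\tau(x_0,r)$ alone (together with the dilatation bound). The theorem you actually invoke gives pointwise conformality with no rate whatsoever; you neither state the quantitative version precisely nor prove it, and it is exactly the non-classical part of the result --- everything before it is standard. Such control can indeed be extracted, but doing so amounts to the module bookkeeping that occupies the paper: the inequalities (\ref{eq:22})--(\ref{eq:23}) give the estimate $\left|\Mod{F^{\widetilde{\mu}}\left(A_{\zeta,\rho_2,\rho_1}\right)}-\ln\left(\rho_1/\rho_2\right)\right|<\epsilon$ uniformly in $\zeta\in\mathbb{S}^1$ for radii below a threshold depending only on $\epsilon$ (Claim \ref{claim2}), and then, instead of proving uniform convergence of difference quotients, the paper anchors the pointwise limit to a fixed-scale quantity, the reduced module $\modred{F^{\widetilde{\mu}}\left(D(\zeta,r)\right)}{f(\zeta)}$, via Claim \ref{claim3}; continuity of $\zeta\mapsto\modred{F^{\widetilde{\mu}}\left(D(\zeta,r)\right)}{f(\zeta)}$ (a soft normal-families argument) then yields continuity of $\ln|f'|$, with no need to control the argument of the difference quotient or the uniformity of the full complex limit. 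If you prefer your formulation, you must prove a uniform multiplicative estimate of the type $\sup_{x_0\in J}\sup_{0<|t|\le r}\left|\ln\frac{G(x_0+t)-G(x_0)}{t\,c(x_0)}\right|\to 0$ as $r\to 0$ (and note that passing from this to the additive uniform convergence you use also needs local boundedness of $c$, or simply working with logarithms); the natural proof of that estimate --- comparing $\ln M(\rho)-\ln\rho$ at two scales through the modulus of the image annulus, together with circular distortion tending to $1$ --- is, in substance, Claims \ref{claim2} and \ref{claim3}. So your architecture is workable and close in spirit to the paper's, but as written the decisive estimate is assumed rather than proved, and closing it requires either reproducing this module argument or citing a precise quantitative statement from the literature (e.g., of the kind found in \cite{gutlyanskii&martio,brakalova10}).
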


The strategy of the proof takes advantage of an approach used by the second author of this paper and J. A. Jenkins \cite{brakalova&jenkins02}, modified  to the case of the unit disc.  We  first use the  \emph{Teichm\"uller--Wittich--Bellinski\u{\i}} to show that each  element of $\tei{1}$  has a non-vanishing derivative at each point of $\mathbb{S}^1$. Then, we use properties of the \emph{reduced module} of a simply-connected domain  to show that the derivatives of  the elements of $\tei{1}$  are continuous. As mentioned earlier, since $\tei{p} \subset \tei1$ for $0<p\leq 1$, it follows immediately that for $0<p\leq 1$, any element in $\tei{p}$ is continuously differentiable with non-vanishing derivative.

\section{Background}

In  this section we  recall some classic  notions from  geometric function theory. Such notions are most notably and thoroughly investigated in Teichm\"uller's \emph{Habilitationsschrift} (Habilitation Thesis) \cite{T13}. 

\subsection{Module of a doubly-connected domain} Let $D$ be a (non-degenerate) doubly-connected domain of the extended complex plane, that is, the complement of $D$ is an union of two disjoint  simply-connected domains, each bounded by a Jordan curve. It is well known (see \cite{lehto&virtanen,T13}) that there exists a biholomorphic function that maps $D$ onto an annulus of inner radius $r_1$ and outer radius $r_2$ for some $0<r_2<r_1<\infty$. The \emph{module} $\Mod{D}$ of $D$  is $\ln\left( \tfrac{r_2}{r_2}\right)$. It is a \emph{conformal invariant}, namely, if $\Psi : D \rightarrow \Psi(D)$ is a biholomorphic function, then $\Mod{D}=\Mod{\Psi(D)}$. 

It is also well known (see \cite{lehto&virtanen,T13}) that the module is \emph{superadditive}. More precisely, if $D_1$ and $D_2$ are two disjoint doubly-connected subdomains of a doubly-connected domain $D_3$, where each separates some $z_0 \in\mathbb{C}$ from $\infty$, then 
\begin{equation}\label{eq:21}
\Mod{D_1}+\Mod{D_2}\leq \Mod{D_3}.
\end{equation}
In saying that a doubly-connected domain separates $z_0$ from $\infty$, we mean that one component of its complement  contains $z_0$ in its interior while the  other component  contains $\infty$.

Let us now recall two inequalities that will be used in the proof of the main result. For $0<r_2<r_1$  and $\zeta \in\mathbb{C}$ we set $A_{\zeta,r_2 , r_1}=\left\lbrace z \mid r_2 < \left| z -\zeta \right| < r_1\right\rbrace$. Let $F:A_{\zeta,r_2 , r_1} \rightarrow F\left(A_{\zeta,r_2 , r_1} \right)$ be a quasiconformal mapping. Then setting $z=\zeta+re^{i\theta}, r_2<r<r_1$ we have
\begin{equation}\label{eq:22}
\Mod{F\left(A_{\zeta , r_2 , r_1}\right)}\leq \frac{1}{2\pi}\iint_{A_{\zeta , r_2 , r_1}}{\frac{1+\left| \mu_{F}(z)\right|}{1-\left| \mu_{F}(z)\right|}\cdot\frac{dxdy}{\left| z-\zeta\right|^2}},
\end{equation}
and 
\begin{equation}\label{eq:23}
2\pi\int_{r_2}^{r_1}{\frac{1}{\int_{0}^{2\pi}{\frac{1+\left| \mu_{F}\left(z \right)\right|}{1-\left| \mu_{F}\left(z \right)\right|}d\theta}}\cdot\frac{dr}{r}}\leq \Mod{F\left(A_{\zeta , r_2 , r_1}\right)}.
\end{equation}
These estimates could be obtained  following Teichm\"uller's approach based on  the  \emph{length-area method}  in \cite[\S 6.3]{T13},  where he arrived at  weaker versions of (\ref{eq:22}) and (\ref{eq:23}).  Estimates equivalent to (\ref{eq:22}) and (\ref{eq:23})---some proved under more general assumptions and different methods---can be found in  \cite{reich&walczak,gutlyanskii&martio,brakalova10} and others.

\subsection{Reduced module of a simply-connected domain} Let $\Omega$ be a simply-connected domain of the complex plane different from $\mathbb{C}$. Let $\zeta \in \Omega$. For $r>0$,  let  $D(\zeta,r)$  denote the disc of radius $r$ centered at $\zeta$ and let $0<r_2<r_1$ be  small enough so that $D(\zeta,r_1)\subset \Omega$. From (\ref{eq:21})  follows$$
\Mod{\Omega\setminus D(\zeta,r_1)} + \ln\left( \frac{r_1}{r_2}\right) \leq \Mod{\Omega\setminus D(\zeta,r_2)},
$$
and therefore
$$
\Mod{\Omega\setminus D(\zeta,r_1)} +\ln\left( r_1\right) \leq \Mod{\Omega\setminus D(\zeta,r_2)} +\ln\left( r_2\right).
$$
One defines the reduced module $\modred{\Omega}{\zeta}$ of $\Omega$ at $\zeta$ as  $\lim_{r\rightarrow 0} \Mod{\Omega \setminus D(\zeta,r)}+\ln(r)$. Using, for example, \emph{Koebe distortion theorem} one can show that this limit is finite and $\modred{\Omega}{\zeta} = \ln\left( \left| \Psi^{\prime}(0)\right|\right)$, where $\Psi :  \dd \rightarrow \Omega$ is a biholomorphic function mapping $0$ onto $\zeta$. A detailed proof can be found in  \cite[§1.6]{T13}. From here it follows directly that  $\zeta\mapsto \modred{\Omega}{\zeta}$ is continuous.

Before concluding this subsection let us add one more  property of the reduced module that we will use later. 

If $F:\mathbb{C}\rightarrow \mathbb{C}$ is a homeomorphism then, for any $r>0,$ the function $\zeta\mapsto \modred{F\left( D(\zeta,r) \right)}{F(\zeta)}$ is continuous. Indeed, if $\zeta_n \underset{n\rightarrow \infty}{\longrightarrow}\zeta$, then  by applying a sequence of  biholomorphic functions $z\mapsto F(z+\zeta_n-\zeta)-F(\zeta_n)+F(\zeta), z\in D(\zeta,r),$ one obtains a sequence of domains $D_n,$ which  are all images of $D(\zeta,r)$. Since $F(z)$ is a homeomorphisms  it follows that  $D_n \underset{n\rightarrow \infty}{\longrightarrow} F\left( D(\zeta,r)\right)$ (with respect to the topology induced by the Hausdorff distance on the set of subsets of $\mathbb{C}$). Consider the sequence of biholomorphic functions $\Psi_n : \mathbb{D} \rightarrow D_n$ mapping $0$ onto $F(\zeta),$ normalized by $\Psi_n'(0)>0$. Then for any $n$, $\ln\left(  \Psi_{n}^{\prime}(0)\right)=\modred{D_n}{F(\zeta)}=\modred{F\left( D(\zeta_n,r) \right)}{F(\zeta_n)}$ since a translation does not change the reduced module.  Furthermore,  the sequence of functions $\Psi_n$ forms a normal family and thus, up to a subsequence, $\Psi_n$ converges uniformly (on any compact subset of $\mathbb{D}$) to a biholomorphic function $\Psi_{\infty}: \mathbb{D}\rightarrow F\left( D(\zeta,r) \right)$ mapping $0$ onto $\zeta$. This implies   
\begin{align*}
\modred{F\left(D(\zeta,r)\right)}{F(\zeta)} & =\ln\left(  \Psi_{\infty}^{\prime}(0)\right) \\ & = \lim_{n\rightarrow \infty}\ln\left(  \Psi_{n}^{\prime}(0) \right) \\ & =\lim_{n\rightarrow \infty}\modred{F\left( D(\zeta_n,r) \right)}{F(\zeta_n)},
\end{align*}
and thus we have continuity.

\subsection{Teichm\"uller--Wittich--Bellinski\u{\i} theorem} First, let us recall that a mapping $F:\mathbb{C}\rightarrow\mathbb{C}$ is said to be \emph{conformal} at $z_0 $ if $\lim_{z\rightarrow z_0}{\tfrac{F(z)-F(z_0)}{z-z_0}}$ exists and is different from $0$. Following \cite[Chapter V, Theorem 6.1]{lehto&virtanen} the well-known  Teichm\"uller--Wittich--Bellinski\u{\i} theorem can be stated as follows:
\begin{theorem}\label{theorem:1}
Let $D$ be a domain of the complex plane and let $z_0\in D$. Let $F:D\rightarrow F(D)$ be a quasiconformal mapping. If there exists a neighborhood $\mathcal{U}$ of $z_0$ contained in $D$ such that 
$$
\iint_{\mathcal{U}}{\frac{\left|\mu_{F}(z)\right|}{\left| z-z_0 \right|^2}dxdy}<\infty;
$$
then $F$ is conformal at $z=z_0$.
\end{theorem}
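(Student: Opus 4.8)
The plan is to reduce the statement to an asymptotic rigidity statement for $F$ in logarithmic coordinates, and to extract that rigidity from the two module inequalities \eqref{eq:22} and \eqref{eq:23}.

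\emph{Normalization and the key module estimate.} After a translation in source and target I may assume $z_0=0$ and $F(0)=0$, and I fix $\rho_0>0$ with $\overline{D(0,\rho_0)}\subset\mathcal U$. Write $K_F=\frac{1+|\mu_F|}{1-|\mu_F|}$. Since $F$ is quasiconformal, $|\mu_F|\le k<1$ a.e., whence $0\le K_F-1\le\frac{2}{1-k}\,|\mu_F|$; thus the hypothesis is equivalent to $\iint_{D(0,\rho_0)}\frac{K_F-1}{|z|^2}\,dxdy<\infty$, and in particular the tail $\iint_{|z|<\rho}\frac{K_F-1}{|z|^2}\,dxdy\to0$ as $\rho\to0$. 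First I would combine \eqref{eq:22} and \eqref{eq:23} into the two-sided estimate
\[
\Bigl|\,\Mod{F(A_{0,r_2,r_1})}-\ln\tfrac{r_1}{r_2}\,\Bigr|\le\frac{1}{2\pi}\iint_{A_{0,r_2,r_1}}\frac{K_F-1}{|z|^2}\,dxdy,\qquad 0<r_2<r_1\le\rho_0.
\]
For the upper bound I would insert $K_F=1+(K_F-1)$ into \eqref{eq:22} and use $\frac{1}{2\pi}\iint_{A_{0,r_2,r_1}}\frac{dxdy}{|z|^2}=\ln\tfrac{r_1}{r_2}$; for the lower bound I would write $\int_0^{2\pi}K_F\,d\theta=2\pi\bigl(1+\Phi(r)\bigr)$ with $\Phi\ge0$ in \eqref{eq:23} and estimate $\frac{1}{1+\Phi}\ge1-\Phi$.

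\emph{Passage to logarithmic coordinates.} The substitutions $z=e^{\zeta}$ and $w=e^{\omega}$ are conformal, hence preserve $|\mu|$, and satisfy $\frac{dxdy}{|z|^2}=d\xi\,d\eta$ for $\zeta=\xi+i\eta$. They conjugate $F$ to a quasiconformal self-map $\widetilde F(\zeta)=\log F(e^{\zeta})$ of the left half-cylinder $\{\xi<\ln\rho_0\}/2\pi i\mathbb Z$, with the same $k$ and with $\iint|\mu_{\widetilde F}|\,d\xi\,d\eta<\infty$. A round annulus becomes a straight sub-cylinder whose module equals its height, so the key estimate says that each band $B_t=\{t-1<\xi<t\}$ is mapped by $\widetilde F$ onto a ring of module within $\varepsilon_t:=\frac{C}{2\pi}\iint_{B_t}|\mu_{\widetilde F}|$ of its height $1$. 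Crucially, since $\widetilde F(\zeta)-\zeta=\log\bigl(F(z)/z\bigr)$ with $z=e^{\zeta}$, the conclusion ``$F$ is conformal at $0$'' becomes exactly ``$\widetilde F(\zeta)-\zeta$ tends to a finite limit as $\xi\to-\infty$, uniformly in $\eta$'': the real part of that limit is $\ln|c|$ and the imaginary part is $\arg c$, where $c=\lim_{z\to0}F(z)/z$, automatically nonzero because $\log c$ is finite.

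\emph{Summation over dyadic bands.} Taking $t=\ln\rho_0-n$ for $n\ge0$, the bands $B_{\ln\rho_0-n}$ tile $\{\xi<\ln\rho_0\}$, so $\sum_n\iint_{B_{\ln\rho_0-n}}|\mu_{\widetilde F}|=\iint_{\{\xi<\ln\rho_0\}}|\mu_{\widetilde F}|<\infty$ and the numbers $\varepsilon_{\ln\rho_0-n}$ are summable. Applying \eqref{eq:22} and \eqref{eq:23} to $\widetilde F$ on each band---and on its translates in the $\eta$ variable---I would bound the horizontal stretch, the shear, and the vertical shift produced by $\widetilde F$ across $B_t$, each by a constant multiple of $\varepsilon_t$. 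Because $\sum_n\varepsilon_{\ln\rho_0-n}<\infty$, the increments of $\widetilde F(\zeta)-\zeta$ across successive bands form an absolutely summable series; telescoping yields a finite limit as $\xi\to-\infty$, and the bound on the shear provides the uniformity in $\eta$. Undoing the substitution gives $F(z)/z\to c\ne0$.

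\emph{Main obstacle.} The delicate point is that a module is a single real invariant of a ring, so on its own it controls only the radial stretch; the crux is to upgrade it into two-dimensional information, namely simultaneous control of the eccentricity of the images of the circles $|z|=r$ and of the rotation $\arg\bigl(F(re^{i\theta})/(re^{i\theta})\bigr)$, with uniformity in $\theta$. The mechanism that makes this work is that \eqref{eq:22} and \eqref{eq:23}, applied to the whole family of sub-cylinders and combined with the summability of $\iint|\mu_{\widetilde F}|$ over dyadic bands, force the incremental distortions in all these directions to be summable rather than merely bounded. Carrying this out rigorously---in particular verifying the uniformity in $\eta$---is the technical heart of the argument, and is precisely the content of the classical Teichm\"uller--Wittich--Belinski\u{\i} reasoning in \cite[Chapter V]{lehto&virtanen} transposed to the present setting.
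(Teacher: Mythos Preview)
The paper does not contain a proof of Theorem~\ref{theorem:1}: it is quoted as the classical Teichm\"uller--Wittich--Belinski\u{\i} theorem with a reference to \cite[Chapter~V, Theorem~6.1]{lehto&virtanen} and then used as a black box in the proof of Claim~\ref{claim1}. So there is no ``paper's own proof'' to compare your proposal against.

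As for the proposal itself: your normalization and the two-sided module estimate you extract from \eqref{eq:22}--\eqref{eq:23} are correct (the paper performs essentially the same computation in the proof of Claim~\ref{claim2}), and the passage to logarithmic coordinates together with the summability over unit bands is the right framework. But the proposal is not a self-contained proof. As you yourself identify in the ``Main obstacle'' paragraph, the module of a concentric annulus controls only the radial stretch $|F(z)|/|z|$; obtaining the existence of $\lim_{z\to0}\arg\bigl(F(z)/z\bigr)$---equivalently, the uniformity in $\eta$ of the limit of $\widetilde F(\zeta)-\zeta$---is the entire substance of the theorem, and you do not carry it out: you defer it to \cite[Chapter~V]{lehto&virtanen}, which is precisely the reference the paper already cites for the whole statement. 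In that sense your outline is an accurate roadmap of the classical argument rather than an independent proof, and it ultimately rests on the same source as the paper. If you want to close the gap, you must actually supply the mechanism (e.g.\ applying the module estimates to suitably rotated or sheared curve families, or an explicit distortion argument on each band) that bounds the angular oscillation by a summable quantity.
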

The history of this theorem and its extensions is rather long and we may refer the curious reader to some of the following  papers \cite{belinskii,reich&walczak,drasin,brakalova&jenkins94,gutlyanskii&martio,brakalova10,shishikura} and to \cite{alberge&brakalova&papadopoulos}.

\section{Proof of the main result}\label{sec:3}

Let  $f\in \tei{1}$. By definition, there exists a quasiconformal extension $F$ of $f$ to the closed unit disc  such that 
\begin{equation}\label{eq:3}
\iint_{\dd}{\left|\mu_{F}(z)\right| d\sigma(z)}<\infty.
\end{equation}

Let $\widetilde{\mu}$ be a function defined on the extended complex plane which coincides with $\mu$ on $\dd$ and which is identically  $0$ outside the disc. Let $F^{\widetilde{\mu}}$ be the unique quasiconformal mapping of the complex plane  with Beltrami coefficient $\widetilde{\mu}$   that fixes  $1$, $i$, and $-i$. Therefore, we have  $F^{\widetilde{\mu}}_{\vert_{\dd}}=F$ and  $F^{\widetilde{\mu}}_{\vert_{\mathbb{S}^1}}=f$. 

\begin{claim}\label{claim1}
The quasiconformal mapping $F^{\widetilde{\mu}}$ is conformal at any point of $\mathbb{S}^1$. Therefore, $f$ is a diffeomorphism of $\mathbb{S}^1$.
\end{claim}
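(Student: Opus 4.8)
The plan is to apply the Teichm\"uller--Wittich--Bellinski\u{\i} theorem (Theorem~\ref{theorem:1}) to the map $F^{\widetilde{\mu}}$ at each point $z_0\in\mathbb{S}^1$, and then to convert the resulting conformality at the boundary into differentiability of $f$ along $\mathbb{S}^1$. The first step is to observe that, because $\widetilde{\mu}$ vanishes identically outside $\dd$, the Beltrami coefficient $\mu_{F^{\widetilde{\mu}}}$ is supported in $\dd$, so for any neighborhood $\mathcal{U}$ of $z_0$ the integral appearing in Theorem~\ref{theorem:1} reduces to
$$
\iint_{\mathcal{U}}\frac{\left|\mu_{F^{\widetilde{\mu}}}(z)\right|}{\left|z-z_0\right|^2}\,dxdy \;=\; \iint_{\mathcal{U}\cap\dd}\frac{\left|\mu_{F}(z)\right|}{\left|z-z_0\right|^2}\,dxdy \;\leq\; \iint_{\dd}\frac{\left|\mu_{F}(z)\right|}{\left|z-z_0\right|^2}\,dxdy .
$$
Thus it is enough to bound the right-hand side, and this is where the hypothesis $f\in\tei{1}$ enters: I would compare $\left|z-z_0\right|$ with $1-\left|z\right|^2$ through the elementary inequalities $\left|z-z_0\right|\geq 1-\left|z\right| > \tfrac{1}{2}(1-\left|z\right|^2)$, valid for all $z\in\dd$ and $z_0\in\mathbb{S}^1$, so that $\left|z-z_0\right|^{-2} < 4(1-\left|z\right|^2)^{-2}$ and hence
$$
\iint_{\dd}\frac{\left|\mu_{F}(z)\right|}{\left|z-z_0\right|^2}\,dxdy \;\leq\; 4\iint_{\dd}\left|\mu_{F}(z)\right|\,d\sigma(z)\;<\;\infty
$$
by (\ref{eq:3}). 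The bound is uniform in $z_0$, so Theorem~\ref{theorem:1} applies at every $z_0\in\mathbb{S}^1$ and yields that $F^{\widetilde{\mu}}$ is conformal there, i.e. $c(z_0):=\lim_{z\to z_0}\bigl(F^{\widetilde{\mu}}(z)-F^{\widetilde{\mu}}(z_0)\bigr)/(z-z_0)$ exists and is nonzero; in particular $F^{\widetilde{\mu}}$ is (real) differentiable at each point of $\mathbb{S}^1$.

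The second part of the plan is to deduce that $f=F^{\widetilde{\mu}}_{\vert_{\mathbb{S}^1}}$ is a diffeomorphism of $\mathbb{S}^1$. Parametrizing the circle by $\theta\mapsto e^{i\theta}$, the chain rule shows that $\theta\mapsto F^{\widetilde{\mu}}(e^{i\theta})$ is differentiable at each $\theta_0$ with nonzero velocity $ie^{i\theta_0}c(e^{i\theta_0})$. Writing $f(e^{i\theta})=e^{ig(\theta)}$ for a continuous lift $g$ (which exists since $f$ is an orientation-preserving homeomorphism of $\mathbb{S}^1$) and using that $w\mapsto e^{iw}$ is a local diffeomorphism, one obtains that $g$ is differentiable with $g'(\theta_0)\neq 0$ for every $\theta_0$; as $g$ is moreover a homeomorphism of $\mathbb{R}/2\pi\mathbb{Z}$, the one-dimensional inverse function theorem makes $g^{-1}$ differentiable as well, so $f$ is a diffeomorphism of $\mathbb{S}^1$.

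I do not expect a serious obstacle here: the whole argument rests on the elementary estimate comparing $\left|z-z_0\right|^{-2}$ with the density of the hyperbolic measure, which is exactly what makes the defining integrability condition of $\tei{1}$ feed into the Teichm\"uller--Wittich--Bellinski\u{\i} theorem uniformly over all boundary points. The only point that needs a little care --- and that I would write out in detail --- is the final descent, ensuring that conformality of the planar map $F^{\widetilde{\mu}}$ at each point of $\mathbb{S}^1$ really does give differentiability of $f$ \emph{and} of $f^{-1}$ as self-maps of $\mathbb{S}^1$, since the normalizations and the continuity of the derivative (needed for the full $\mathcal{C}^1$ statement of Theorem~\ref{main-result}) are handled only in the subsequent step of the proof.
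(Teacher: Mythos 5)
Your proposal is correct and follows essentially the same route as the paper: the key step in both is the elementary comparison $\left|z-\zeta_0\right|\geq 1-\left|z\right|>\tfrac{1}{2}\left(1-\left|z\right|^2\right)$, which converts the hypothesis $\mu_F\in\inthyp{1}$ into the integral condition of Theorem~\ref{theorem:1} at every boundary point, after which conformality of $F^{\widetilde{\mu}}$ on $\mathbb{S}^1$ gives a nowhere-vanishing derivative of $f$ and hence a diffeomorphism. The only (harmless) differences are that the paper localizes to a small disc $D(\zeta_0,r)$ meeting only a region where the hyperbolic integral of $\left|\mu_F\right|$ is small (a refinement not needed for this claim), while you integrate over all of $\dd$, and that you write out the circle-parametrization and inverse-function details that the paper leaves implicit.
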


We  apply Theorem \ref{theorem:1} to derive the conformality of $F^{\widetilde{\mu}}$. 

\begin{proof}[Proof of Claim \ref{claim1}]
Let $\zeta_0\in\mathbb{S}^1$. Because of (\ref{eq:3}), one can find a compact subset $K$ of $\dd$ such that 
\begin{equation}\label{eq:4}
\iint_{\dd\setminus K}{\left|\mu_{F}(z)\right| d\sigma(z)}<1.
\end{equation}
Let $r>0$ be such that $\dd \setminus D(\zeta_0, r)\subset \dd\setminus K$.  One first observes that 
\begin{align*}
\forall z \in D(\zeta_0, r)\cap \dd, \; \left(1-\left| z\right|^2 \right)^2& = \left( 1-\left|z\right| \right)^2 \cdot \left( 1 +\left| z\right|\right)^2  \\ & \leq \left| \zeta_0 -z \right|^2 \cdot \left(1+\left| z \right|\right)^2 \\
&< 4\cdot  \left| \zeta_0 -z \right|^2,
\end{align*}
and therefore
\begin{equation}\label{eq:5}
\forall z \in D(\zeta_0, r)\cap \dd, \, \frac{1}{\left| z-\zeta_0\right|^2}< 4\cdot \frac{1}{\left( 1-\left|z\right|^2 \right)^2}.
\end{equation}
It follows 
\begin{align*}
\iint_{D(\zeta_0,r)}{\frac{\left| \widetilde{\mu} (z)\right|}{\left| z-\zeta_0 \right|^2}dxdy}&  = \iint_{D(\zeta_0,r)\cap{\dd}}{\frac{\left| \mu_{F} (z)\right|}{\left| z-\zeta_0 \right|^2}dxdy} \\ & \leq 4 \iint_{D(\zeta_0,r)\cap{\dd}}{\left| \mu_{F} (z)\right| d\sigma(z)}\\ 
& \leq 4.
\end{align*}

We deduce, by Theorem \ref{theorem:1}, that $F^{\widetilde{\mu}}$ is conformal at $z=\zeta_0$ which proves that $f$ is differentiable at $\zeta_0$ and $\left| f^{\prime}(\zeta_0)\right|>0$. Since this is true for any $\zeta_0 \in \mathbb{S}^1,$ we deduce that $f$ is a diffeomorphism of $\mathbb{S}^1$.
\end{proof}

The following two additional results will be needed in the proof of the continuity of  $f^{\prime}$  on $\mathbb{S}^1$. 

\begin{claim}\label{claim2}
Let $\epsilon >0$. Then, there exists $r_{\epsilon}>0$ such that 
$$
\forall \zeta\in\mathbb{S}^1, \forall 0<\rho_2 <\rho_1\leq r_{\epsilon},\;  \left| \Mod{F^{\widetilde{\mu}}\left( A_{\zeta,\rho_2 , \rho_1}\right)} - \ln\left(\frac{\rho_{1}}{\rho_2} \right)\right| < \epsilon.
$$
\end{claim}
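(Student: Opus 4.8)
The plan is to derive the estimate directly from the two module inequalities \eqref{eq:22} and \eqref{eq:23}, using the integrability hypothesis \eqref{eq:3} to control the distortion term $\tfrac{1+|\mu_F|}{1-|\mu_F|}$ uniformly near $\mathbb{S}^1$. The key quantitative input is that because $\mu_F \in \inthyp{1}$, we can choose a compact set $K \subset \dd$ so that $\iint_{\dd \setminus K} |\mu_F(z)|\,d\sigma(z)$ is arbitrarily small, and then pick $r_\epsilon > 0$ so small that for every $\zeta \in \mathbb{S}^1$ the disc $D(\zeta, r_\epsilon)$ is disjoint from $K$; as in \eqref{eq:5}, on $D(\zeta,\rho_1)\cap\dd$ one has $|z-\zeta|^{-2} < 4(1-|z|^2)^{-2}$, so $\iint_{A_{\zeta,\rho_2,\rho_1}} \tfrac{|\mu_F(z)|}{|z-\zeta|^2}\,dx\,dy$ can be made smaller than any prescribed $\delta$, uniformly in $\zeta$ and in $0<\rho_2<\rho_1\le r_\epsilon$. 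Note $\widetilde\mu$ vanishes outside $\dd$, so the integrand is supported on $A_{\zeta,\rho_2,\rho_1}\cap\dd$.

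For the upper bound, I would start from \eqref{eq:22}: since $\tfrac{1+t}{1-t} \le 1 + \tfrac{2t}{1-t} \le 1 + Ct$ for $t \le \|\mu_F\|_\infty < 1$ (with $C = 2/(1-\|\mu_F\|_\infty)$), we get
\begin{align*}
\Mod{F^{\widetilde{\mu}}(A_{\zeta,\rho_2,\rho_1})} &\le \frac{1}{2\pi}\iint_{A_{\zeta,\rho_2,\rho_1}}\frac{1+|\widetilde\mu(z)|}{1-|\widetilde\mu(z)|}\cdot\frac{dx\,dy}{|z-\zeta|^2}\\
&\le \frac{1}{2\pi}\iint_{A_{\zeta,\rho_2,\rho_1}}\frac{dx\,dy}{|z-\zeta|^2} + \frac{C}{2\pi}\iint_{A_{\zeta,\rho_2,\rho_1}}\frac{|\widetilde\mu(z)|}{|z-\zeta|^2}\,dx\,dy\\
&= \ln\!\left(\frac{\rho_1}{\rho_2}\right) + \frac{C}{2\pi}\iint_{A_{\zeta,\rho_2,\rho_1}}\frac{|\widetilde\mu(z)|}{|z-\zeta|^2}\,dx\,dy,
\end{align*}
where the first integral is computed in polar coordinates centered at $\zeta$. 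By the uniform smallness above, the last term is $<\epsilon$ once $r_\epsilon$ is chosen appropriately, which gives the desired upper estimate on $\Mod{F^{\widetilde{\mu}}(A_{\zeta,\rho_2,\rho_1})} - \ln(\rho_1/\rho_2)$.

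For the lower bound I would use \eqref{eq:23} together with the Cauchy--Schwarz (or Jensen) inequality $\int_0^{2\pi} g\,d\theta \cdot \int_0^{2\pi} g^{-1}\,d\theta \ge (2\pi)^2$ applied to $g(\theta) = \tfrac{1+|\mu_F|}{1-|\mu_F|}$, so that $\bigl(\int_0^{2\pi} g\,d\theta\bigr)^{-1} \ge (2\pi)^{-2}\int_0^{2\pi} g^{-1}\,d\theta$. Since $g^{-1} = \tfrac{1-|\mu_F|}{1+|\mu_F|} \ge 1 - 2|\mu_F|$, this yields
$$
\Mod{F^{\widetilde{\mu}}(A_{\zeta,\rho_2,\rho_1})} \ge \frac{1}{2\pi}\int_{\rho_2}^{\rho_1}\int_0^{2\pi}\bigl(1 - 2|\widetilde\mu(\zeta+re^{i\theta})|\bigr)d\theta\,\frac{dr}{r} = \ln\!\left(\frac{\rho_1}{\rho_2}\right) - \frac{1}{\pi}\iint_{A_{\zeta,\rho_2,\rho_1}}\frac{|\widetilde\mu(z)|}{|z-\zeta|^2}\,dx\,dy,
$$
and again the error term is $<\epsilon$ uniformly. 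Combining the two bounds gives the claim. The main obstacle, though a mild one, is making sure all estimates are genuinely \emph{uniform} in $\zeta\in\mathbb{S}^1$: this requires choosing $r_\epsilon$ after $K$ so that $\dd\setminus D(\zeta,r_\epsilon)\subset\dd\setminus K$ holds simultaneously for all $\zeta$, which is possible because $K$ is compact and disjoint from $\mathbb{S}^1$, so $\mathrm{dist}(K,\mathbb{S}^1)>0$; one then takes $r_\epsilon$ below that distance and small enough that $\tfrac{C}{2\pi}$ (resp.\ $\tfrac{1}{\pi}$) times $4\iint_{\dd\setminus K}|\mu_F|\,d\sigma < \epsilon$.
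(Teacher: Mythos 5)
Your proof is correct and follows essentially the same route as the paper: both bounds come from the module inequalities \eqref{eq:22} and \eqref{eq:23}, the error terms are reduced to $\iint |\mu_F|\,|z-\zeta|^{-2}\,dxdy$ over the annulus, and uniformity in $\zeta$ is obtained exactly as in the paper via \eqref{eq:5} and a compact set $K$ chosen from \eqref{eq:3} with $r_\epsilon$ below $\mathrm{dist}(K,\mathbb{S}^1)$. The only cosmetic difference is in the lower bound, where you invoke Cauchy--Schwarz and $\tfrac{1-t}{1+t}\ge 1-2t$ instead of the paper's direct manipulation of $2\pi-\int_0^{2\pi}\tfrac{1+|\widetilde\mu|}{1-|\widetilde\mu|}\,d\theta$; both yield the same kind of estimate.
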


\begin{claim}\label{claim3}
Let $\zeta\in\mathbb{S}^1$ and $r>0$. Then,
$$
\lim_{\rho\rightarrow 0}{\Mod{F^{\widetilde{\mu}}\left( A_{\zeta,\rho,r}\right)}+\ln\left( \left| f^{\prime}(\zeta)\right|\rho\right)} = \modred{F^{\widetilde{\mu}}\left( D\left( \zeta, r\right)\right)}{f(\zeta)}.
$$
\end{claim}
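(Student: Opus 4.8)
The plan is to combine the conformality of $G:=F^{\widetilde{\mu}}$ at $\zeta$, established in Claim \ref{claim1}, with the superadditivity of the module, inequality (\ref{eq:21}). Since $G$ is a homeomorphism of the plane that sends compact sets to compact sets, $\Omega:=G(D(\zeta,r))$ is a bounded simply-connected domain different from $\mathbb{C}$, so that $\modred{\Omega}{f(\zeta)}$ is well defined and, for $0<\rho<r$, $G(A_{\zeta,\rho,r})=\Omega\setminus\overline{G(D(\zeta,\rho))}$. Claim \ref{claim1} also gives $G(\zeta)=f(\zeta)$ and $a:=|f'(\zeta)|=|G'(\zeta)|>0$. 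The idea is to trap the Jordan domain $G(D(\zeta,\rho))$ between two round discs centered at $f(\zeta)$ of radii comparable to $a\rho$, and to compare $\Mod{G(A_{\zeta,\rho,r})}$ with the module of the ``round'' ring domain $\Omega\setminus\overline{D(f(\zeta),s)}$, which satisfies $\Mod{\Omega\setminus\overline{D(f(\zeta),s)}}+\ln s\to\modred{\Omega}{f(\zeta)}$ as $s\to0^+$ by the definition of the reduced module.

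Fix $\epsilon\in(0,1)$. Writing $G(z)=f(\zeta)+G'(\zeta)(z-\zeta)+o(|z-\zeta|)$, evaluating on the circle $|z-\zeta|=\rho$, and using that $G(D(\zeta,\rho))$ is a Jordan domain with $f(\zeta)$ in its interior, one produces $\rho_\epsilon\in(0,r)$ such that
$$
D(f(\zeta),(1-\epsilon)a\rho)\ \subset\ G(D(\zeta,\rho))\ \subset\ D(f(\zeta),(1+\epsilon)a\rho)\qquad\text{for }0<\rho\le\rho_\epsilon,
$$
and, after shrinking $\rho_\epsilon$, $\overline{D(f(\zeta),(1+\epsilon)a\rho_\epsilon)}\subset\Omega$. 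Then I would apply (\ref{eq:21}) twice. First, inside the ring domain $G(A_{\zeta,\rho,r})$, which contains the two disjoint ring domains $\Omega\setminus\overline{D(f(\zeta),(1+\epsilon)a\rho)}$ and $D(f(\zeta),(1+\epsilon)a\rho)\setminus\overline{G(D(\zeta,\rho))}$, each separating $f(\zeta)$ from $\infty$; dropping the (nonnegative) module of the second one gives the lower bound $\Mod{\Omega\setminus\overline{D(f(\zeta),(1+\epsilon)a\rho)}}\le\Mod{G(A_{\zeta,\rho,r})}$. Second, inside the ring domain $\Omega\setminus\overline{D(f(\zeta),(1-\epsilon)a\rho)}$, which contains the two disjoint ring domains $G(A_{\zeta,\rho,r})$ and $G(D(\zeta,\rho))\setminus\overline{D(f(\zeta),(1-\epsilon)a\rho)}$, again each separating $f(\zeta)$ from $\infty$; this gives the upper bound $\Mod{G(A_{\zeta,\rho,r})}\le\Mod{\Omega\setminus\overline{D(f(\zeta),(1-\epsilon)a\rho)}}$. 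Adding $\ln(a\rho)=\ln((1\pm\epsilon)a\rho)-\ln(1\pm\epsilon)$ to the two bounding quantities and letting $\rho\to0^+$, the definition of the reduced module shows that both $\liminf_{\rho\to0^+}$ and $\limsup_{\rho\to0^+}$ of $\Mod{G(A_{\zeta,\rho,r})}+\ln(a\rho)$ lie in the interval $\left[\modred{\Omega}{f(\zeta)}-\ln(1+\epsilon),\ \modred{\Omega}{f(\zeta)}-\ln(1-\epsilon)\right]$. Since $\epsilon\in(0,1)$ is arbitrary and $\ln(1\pm\epsilon)\to0$ as $\epsilon\to0^+$, the limit exists and equals $\modred{\Omega}{f(\zeta)}=\modred{F^{\widetilde{\mu}}(D(\zeta,r))}{f(\zeta)}$, which is the claim.

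The step that requires the most care is the double use of (\ref{eq:21}): one must check that the ring domains involved are non-degenerate doubly-connected domains for $\rho$ small, that in each application the two of them are disjoint and contained in the larger ring domain, and that each separates $f(\zeta)$ from $\infty$. All of this follows from the two-sided inclusion for $G(D(\zeta,\rho))$ above, which in fact holds with closed discs as well (since in each triangle-inequality estimate for $|G(z)-f(\zeta)|$ one of the two terms is strict), so that $\overline{G(D(\zeta,\rho))}\subset D(f(\zeta),(1+\epsilon)a\rho)$ and $\overline{D(f(\zeta),(1-\epsilon)a\rho)}\subset G(D(\zeta,\rho))$. Everything else is the definition of the reduced module and the elementary fact that $\ln(1\pm\epsilon)\to0$.
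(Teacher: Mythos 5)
Your proof is correct and follows essentially the same route as the paper: using conformality of $F^{\widetilde{\mu}}$ at $\zeta$ to trap $F^{\widetilde{\mu}}(D(\zeta,\rho))$ between round discs centered at $f(\zeta)$ of radius approximately $|f'(\zeta)|\rho$, then comparing modules via monotonicity/superadditivity and invoking the definition of the reduced module. The only cosmetic difference is that the paper works with $m(\rho)=\min_{|z-\zeta|=\rho}|F^{\widetilde{\mu}}(z)-f(\zeta)|$ and $M(\rho)=\max_{|z-\zeta|=\rho}|F^{\widetilde{\mu}}(z)-f(\zeta)|$ and the limit $m(\rho)/(|f'(\zeta)|\rho),\,M(\rho)/(|f'(\zeta)|\rho)\to 1$, whereas you quantify the same sandwich with radii $(1\pm\epsilon)|f'(\zeta)|\rho$ and spell out the superadditivity step the paper treats as evident.
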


\begin{proof}[Proof of Claim \ref{claim2}]
Let $\zeta\in\mathbb{S}^1$ and $0<\rho_2 < \rho_1$.  One the one hand, by applying (\ref{eq:22}) one gets
\begin{align}
\Mod{F^{\widetilde{\mu}}\left( A_{\zeta,\rho_2 , \rho_1}\right)} - \ln\left(\frac{\rho_{1}}{\rho_2} \right) & \leq \frac{1}{2\pi}\iint_{A_{\zeta , \rho_2 , \rho_1}}{\frac{1+\left| \widetilde{\mu}(z)\right|}{1-\left| \widetilde{\mu}(z)\right|}\cdot\frac{dxdy}{\left| z-\zeta\right|^2}} - \ln\left(\frac{\rho_{1}}{\rho_2} \right) \nonumber\\
& = \frac{1}{2\pi}\iint_{A_{\zeta , \rho_2 , \rho_1}}{\left(\frac{1+\left| \widetilde{\mu}(z)\right|}{1-\left| \widetilde{\mu}(z)\right|}-1\right)\cdot\frac{dxdy}{\left| z-\zeta\right|^2}} \nonumber\\
& \leq \frac{1}{\pi \left( 1-\left\Vert \mu_F\right\Vert_{\infty} \right)}\iint_{A_{\zeta , \rho_2 , \rho_1}\cap\dd}{\left| \mu_{F}(z)\right|\cdot \frac{dxdy}{\left|z-\zeta \right|^2}}.\label{eq:31}
\end{align}
On the other hand since 
$$
\int_{0}^{2\pi}{\frac{1+\left| \widetilde{\mu}\left(z \right)\right|}{1-\left| \widetilde{\mu}\left(z \right)\right|}d\theta}\geq 2\pi ,
$$
 by means of (\ref{eq:23}) one obtains
\begin{align}
\Mod{F^{\widetilde{\mu}}\left( A_{\zeta,\rho_2 , \rho_1}\right)} - \ln\left(\frac{\rho_{1}}{\rho_2} \right) & \geq 2\pi\int_{\rho_2}^{\rho_1}{\frac{1}{\int_{0}^{2\pi}{\frac{1+\left| \widetilde{\mu}\left(z \right)\right|}{1-\left| \widetilde{\mu}\left(z \right)\right|}d\theta}}\cdot\frac{dr}{r}} -\ln\left(\frac{\rho_{1}}{\rho_2} \right) \nonumber \\ & = \int_{\rho_2}^{\rho_1}{\frac{2\pi-\int_{0}^{2\pi}{\frac{1+\left| \widetilde{\mu}\left(z \right)\right|}{1-\left| \widetilde{\mu}\left(z \right)\right|}d\theta}}{\int_{0}^{2\pi}{\frac{1+\left| \widetilde{\mu}\left(z \right)\right|}{1-\left| \widetilde{\mu}\left(z \right)\right|}d\theta}}\cdot\frac{dr}{r}} \nonumber \\
& = \int_{\rho_2}^{\rho_1}{\frac{\int_{0}^{2\pi}{\frac{-2\left| \widetilde{\mu}\left(z \right)\right|}{1-\left| \widetilde{\mu}\left(z \right)\right|}d\theta}}{\int_{0}^{2\pi}{\frac{1+\left| \widetilde{\mu}\left(z \right)\right|}{1-\left| \widetilde{\mu}\left(z \right)\right|}d\theta}}\cdot\frac{dr}{r}} \nonumber\\
& \geq \frac{-1}{\pi}  \iint_{A_{\zeta , \rho_2 , \rho_1}\cap\dd}{\frac{\left| \mu_{F}(z)\right|}{1-\left| \mu_{F}\left( z \right)\right|}\cdot \frac{dxdy}{\left|z-\zeta \right|^2}}\nonumber \\
& \geq -\frac{1}{\pi \left( 1-\left\Vert \mu_F\right\Vert_{\infty} \right)}\iint_{A_{\zeta , \rho_2 , \rho_1}\cap\dd}{\left| \mu_{F}(z)\right|\cdot \frac{dxdy}{\left|z-\zeta \right|^2}}.\label{eq:32}
\end{align}
Let $\epsilon>0$. Still because of (\ref{eq:3}) there exists a compact set $K_{\epsilon}$ of $\dd$ such that 
\begin{equation}\label{eq:6}
\iint_{\dd\setminus K_{\epsilon}}{\left| \mu_{F}(z)\right| d\sigma(z)} <\frac{\pi(1-\left\Vert \mu_{F} \right\Vert_{\infty})}{4}\epsilon.
\end{equation}
Let $r_{\epsilon}>0$ be the distance between $\mathbb{S}^1$ and $K_{\epsilon}$. Thus, for any $0<\rho_2 < \rho_1\leq r_{\epsilon}$ one obtains by combining (\ref{eq:31}), (\ref{eq:32}), (\ref{eq:5}) and (\ref{eq:6})  
$$
\forall \zeta\in\mathbb{S}^1,\; -\epsilon < \Mod{F^{\widetilde{\mu}}\left( A_{\zeta,\rho_2 , \rho_1}\right)} - \ln\left(\frac{\rho_{1}}{\rho_2} \right) < \epsilon,
$$
and therefore  Claim \ref{claim2} follows.
\end{proof}

\begin{proof}[Proof of Claim \ref{claim3}]
Let $\zeta\in\mathbb{S}^1$ and let $r>0$. For any $0<\rho<r$, let 
$$m(\rho)=\min_{\left| z -\zeta\right|=\rho}{\left| F^{\widetilde{\mu}}(z)-f(\zeta)\right|} \textrm{ and } M(\rho)=\max_{\left| z -\zeta \right|=\rho}{\left| F^{\widetilde{\mu}}(z)-f(\zeta)\right|}.
$$
Since $F^{\widetilde{\mu}}$ is conformal  at $\zeta$ one has
\begin{equation}\label{eq7}
\lim_{\rho\rightarrow 0}\frac{\left| f^{\prime}(\zeta)\right|\rho}{M(\rho)}=\lim_{\rho\rightarrow 0}\frac{\left| f^{\prime}(\zeta)\right|\rho}{m(\rho)}= 1.
\end{equation}
Furthermore, it is evident that 
\begin{align*}
\Mod{F^{\widetilde{\mu}}\left( D(f(\zeta),r )\right) \setminus D(\zeta, M(\rho))} & \leq \Mod{F^{\widetilde{\mu}}\left( A_{\zeta, \rho, r} \right)} \\ & \hphantom{dsds} \leq \Mod{F^{\widetilde{\mu}}\left( D(\zeta,r )\right)\setminus D(f(\zeta), m(\rho))}.
\end{align*}
Therefore, by adding $\ln\left(\left| f^{\prime}(\zeta)\right|\rho\right)$, using (\ref{eq7}), and letting $\rho \rightarrow 0$ it follows that 
$$
\lim_{\rho\rightarrow 0} \Mod{F^{\widetilde{\mu}}\left( A_{\zeta, \rho, r} \right)}+\ln\left(\left| f^{\prime}(\zeta)\right|\rho\right) =  \modred{F^{\widetilde{\mu}}\left( D\left( \zeta, r\right)\right)}{f(\zeta)},
$$
which proves Claim \ref{claim3}.
\end{proof}

We have now all the ingredients necessary to complete the proof of our main  Theorem \ref{main-result}. 

Let $\zeta_0 \in\mathbb{S}^1$.  Let $\epsilon>0$. Let $r_{\frac{\epsilon}{5}}>0$ be as in Claim \ref{claim2}. By the continuity of the reduced module discussed earlier one can find a $\delta_{\frac{\epsilon}{5}}>0$ such that if $\zeta\in\mathbb{S}^1$ and $\left|\zeta -\zeta_0\right|<\delta_{\frac{\epsilon}{5}}$ then 
\begin{equation}\label{eq8}
\left| \modred{F^{\widetilde{\mu}}\left( D(\zeta, r_{\frac{\epsilon}{5}})\right)}{f(\zeta)}-\modred{F^{\widetilde{\mu}}\left( D(\zeta_0, r_{\frac{\epsilon}{5}})\right)}{f(\zeta_0)}\right| <\frac{\epsilon}{5}.
\end{equation}

Let  $\zeta \in \mathbb{S}^1$ be  such that $\left|\zeta -\zeta_0\right|<\delta_{\frac{\epsilon}{5}}$. By Claim \ref{claim3} there exist $r_{\zeta_0,1}, r_{\zeta,1}<r_{\frac{\epsilon}{5}}$ such that for any $\rho\leq r_{\zeta_0,1}$
\begin{equation}\label{eq9}
\left|  \Mod{F^{\widetilde{\mu}}\left( A_{\zeta_0, \rho, r_{\frac{\epsilon}{5}}} \right)}+\ln\left(\left| f^{\prime}(\zeta_0)\right|\rho\right) - \modred{F^{\widetilde{\mu}}\left( D\left( \zeta_0, r_{\frac{\epsilon}{5}}\right)\right)}{f(\zeta_0)} \right|<\frac{\epsilon}{5} ,
\end{equation}
and for any $\rho\leq r_{\zeta,1}$
\begin{equation}\label{eq10}
\left|  \Mod{F^{\widetilde{\mu}}\left( A_{\zeta, \rho, r_{\frac{\epsilon}{5}}} \right)}+\ln\left(\left| f^{\prime}(\zeta)\right|\rho\right) - \modred{F^{\widetilde{\mu}}\left( D\left( \zeta, r_{\frac{\epsilon}{5}}\right)\right)}{f(\zeta)} \right|<\frac{\epsilon}{5} .
\end{equation}

Thus, from  the triangle inequality, Claim \ref{claim2}, and Inequalities (\ref{eq8}), (\ref{eq9}), and (\ref{eq10}) we obtain
\begin{align*}
& \left| \ln\left( \left| f^{\prime}(\zeta) \right|\right)- \ln\left( \left| f^{\prime}(\zeta_0) \right|\right) \right| \\  &= \left| \ln\left( \left| f^{\prime}(\zeta) \right| r_{\zeta,1}\right)-\ln\left(r_{\zeta,1} \right)- \ln\left( \left| f^{\prime}(\zeta_0) \right|r_{\zeta_0,1}\right) +\ln\left( r_{\zeta_0,1}\right)\right| \\
& \leq \left| \ln\left( \left| f^{\prime}(\zeta) \right| r_{\zeta,1}\right) +\Mod{F^{\widetilde{\mu}}\left( A_{\zeta, r_{\zeta,1}, r_{\frac{\epsilon}{5}}} \right)}- \modred{F^{\widetilde{\mu}}\left( D\left( \zeta, r_{\frac{\epsilon}{5}}\right)\right)}{f(\zeta)}\right|  \\
& \hphantom{g}+\left| \ln\left( \left| f^{\prime}(\zeta_0) \right| r_{\zeta_0,1}\right) +\Mod{F^{\widetilde{\mu}}\left( A_{\zeta_0, r_{\zeta_0,1}, r_{\frac{\epsilon}{5}}} \right)}- \modred{F^{\widetilde{\mu}}\left( D\left( \zeta_0, r_{\frac{\epsilon}{5}}\right)\right)}{f(\zeta_0)}\right| \\
& \hphantom{ghg}+\left| \modred{F^{\widetilde{\mu}}\left( D\left( \zeta, r_{\frac{\epsilon}{5}}\right)\right)}{f(\zeta)}-\modred{F^{\widetilde{\mu}}\left( D\left( \zeta_0, r_{\frac{\epsilon}{5}}\right)\right)}{f(\zeta_0)}\right| \\
& \hphantom{gh}+\left| -\Mod{F^{\widetilde{\mu}}\left( A_{\zeta, r_{\zeta,1}, r_{\frac{\epsilon}{5}}} \right)}-\ln\left( r_{\zeta,1}\right)+ \Mod{F^{\widetilde{\mu}}\left( A_{\zeta_0, r_{\zeta_0,1}, r_{\frac{\epsilon}{5}}} \right)}+\ln\left( r_{\zeta_0,1}\right)\right| \\
& \leq  3\frac{\epsilon}{5} +\left| -\Mod{F^{\widetilde{\mu}}\left( A_{\zeta, r_{\zeta,1}, r_{\frac{\epsilon}{5}}} \right)}+\ln\left( \frac{r_{\frac{\epsilon}{5}}}{r_{\zeta,1}}\right) \right| \\
& \hphantom{dsdsdsdd}+ \left|\Mod{F^{\widetilde{\mu}}\left( A_{\zeta_0, r_{\zeta_0,1}, r_{\frac{\epsilon}{5}}} \right)}-\ln\left( \frac{r_{\frac{\epsilon}{5}}}{r_{\zeta_0,1}}\right)\right| \\
& \leq \epsilon.
\end{align*}

This shows the continuity of $\left| f^{\prime} \right|$ at  any $\zeta_0\in \mathbb{S}^1,$  thus $f^{\prime}$ is continuously differentiable on $\mathbb{S}^1$ and since the derivative is never $0$, any element $f\in \tei{1}$ is a $\mathcal{C}^1$-diffeomorphism on $\mathbb{S}^1$. Since $ \tei{p}\subset \tei{1}$ ($p\leq 1)$ we have shown that Theorem \ref{main-result} holds.

Since every differentiable quasisymmetric function $f$ on $\mathbb{S}^1 $ is  symmetric in the sense of (\ref{eq:0}),  the following already known property follows from Theorem \ref{main-result}.
\begin{corollary}
Let $0<p\leq 1$. Then, $\tei{p}\subset \stei$.
\end{corollary}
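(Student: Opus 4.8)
The plan is to obtain the corollary as an immediate consequence of Theorem \ref{main-result} together with an elementary computation. Fix $0<p\le 1$ and let $f\in\tei{p}$. Since $\tei{p}\subset\utei$, the map $f$ is already a quasisymmetric homeomorphism of $\mathbb{S}^1$ fixing $1$, $i$, and $-1$, so membership in $\stei$ will follow once we check that $f$ satisfies the symmetry condition (\ref{eq:0}). By Theorem \ref{main-result}, $f$ is a $\mathcal{C}^1$-diffeomorphism of $\mathbb{S}^1$; in particular it is differentiable on $\mathbb{S}^1$ with nowhere vanishing derivative, which is the only input about $f$ that the rest of the argument uses.

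The main step is to check that any such $f$ is symmetric. Setting $g(\theta)=f(e^{i\theta})$, the function $g:\mathbb{R}\to\mathbb{C}$ is $2\pi$-periodic and of class $\mathcal{C}^1$, with $g'(\theta)=i e^{i\theta}f'(e^{i\theta})$; hence $m:=\min_{\theta\in\mathbb{R}}|g'(\theta)|>0$ and $g'$ is uniformly continuous on $\mathbb{R}$. Let $\omega$ be a non-decreasing modulus of continuity for $g'$, so that $\omega(t)\to 0$ as $t\to 0^+$. Writing each increment of $g$ as an integral of $g'$ gives, for every $\theta$ and every $t>0$,
$$
g(\theta+t)-g(\theta)=t\,g'(\theta)+E_+(\theta,t),\qquad g(\theta)-g(\theta-t)=t\,g'(\theta)+E_-(\theta,t),
$$
where $|E_\pm(\theta,t)|\le t\,\omega(t)$ uniformly in $\theta$. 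Consequently, as soon as $\omega(t)<m$ the denominator below is nonzero and
$$
\frac{f(e^{i(\theta+t)})-f(e^{i\theta})}{f(e^{i\theta})-f(e^{i(\theta-t)})}=\frac{1+E_+(\theta,t)/(t\,g'(\theta))}{1+E_-(\theta,t)/(t\,g'(\theta))},
$$
with $|E_\pm(\theta,t)/(t\,g'(\theta))|\le\omega(t)/m$ for all $\theta$. Letting $t\to 0^+$, the right-hand side tends to $1$ uniformly in $\theta$, which is exactly (\ref{eq:0}); thus $f$ is symmetric and, being also a homeomorphism of $\mathbb{S}^1$ fixing $1$, $i$, $-1$, belongs to $\stei$.

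There is no real obstacle here; the single point that must be handled with care is that the convergence in (\ref{eq:0}) has to be uniform on $\mathbb{R}$. This is exactly what the compactness of $\mathbb{S}^1$ supplies: uniform continuity of $g'$ yields one modulus $\omega$ valid for all $\theta$, and the uniform lower bound $|g'(\theta)|\ge m>0$ keeps the error terms uniformly small relative to the main term $t\,g'(\theta)$. One could also phrase the statement for merely differentiable quasisymmetric maps, as in the remark preceding the corollary, but the $\mathcal{C}^1$-diffeomorphism conclusion of Theorem \ref{main-result} is what makes the estimate above completely routine.
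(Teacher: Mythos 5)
Your proof is correct and follows essentially the same route as the paper: the corollary is deduced from Theorem \ref{main-result} via the fact that a smooth circle homeomorphism satisfies the symmetry condition (\ref{eq:0}). The only difference is that the paper simply invokes this fact as known (for differentiable quasisymmetric maps), whereas you prove the $\mathcal{C}^1$ case in full, correctly using compactness of $\mathbb{S}^1$ to get a uniform modulus of continuity for $g'$ and a uniform lower bound $m>0$, which is exactly what is needed for the uniform convergence in (\ref{eq:0}).
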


Let us point out  that although  $\tei{1}\subset \stei$, the quasiconformal extension $F$ of $f$ we were working with may not necessarily be asymptotically conformal on $\mathbb{S}^1$ and Claim \ref{claim2} is not obvious. However, for  $p\geq 2$,  if one specifically employs  the Douady--Earle extension, then Claim \ref{claim2} holds. It seems natural to ask:
\begin{question}
Let $f\in\tei{p}$ (with $0<p\leq 2$). Is there a quasiconformal asymptotically conformal extension $F$ of $f$ to the closed unit disc  for which $\mu_{F} \in \inthyp{p}$?
\end{question}

Furthermore, since we obtain smoothness  properties for the elements of $\tei{p}$ (for $p\leq 1$), we suggest that one can show higher and higher order of smoothness  for  $p<1,$ as $p$ gets smaller and smaller. If this  is the case we would like to find sharp results on how the order of smoothness depends on $p$, a question that seems to be similar  to finding a characterization of $\tei{p}$ using Sobolev spaces for $p\geq 2$.  In addition, we pose the following question:
\begin{question}
What is $\bigcap_{p>0}\tei{p}$?
\end{question}

\textsc{Vincent Alberge, Fordham University, Department of Mathematics, 441 East Fordham Road, Bronx, NY 10458, USA}

\textit{E-mail address:} { \href{mailto:valberge@fordham.edu}{\tt valberge@fordham.edu} } 

\textsc{Melkana Brakalova, Fordham University, Department of Mathematics, 441 East Fordham Road, Bronx, NY 10458, USA}

\textit{E-mail address:} { \href{mailto:brakalova@fordham.edu}{\tt brakalova@fordham.edu} } 

\end{document}